\newtheorem{theorem}{Theorem}
\newtheorem{conjecture}{Conjecture}
\newtheorem{lemma}[theorem]{Lemma}
\title{Triangle-free planar graphs with at most $64^{n^{0.731}}$ 3-colorings}
\author{Zdeněk Dvořák\thanks{Charles University, Prague, Czech Republic.  E-mail: {\tt rakdver@iuuk.mff.cuni.cz}.  Supported in part by ERC Synergy grant DYNASNET no. 810115.}\and
Luke Postle\thanks{University of Waterloo. E-mail: {\tt lpostle@uwaterloo.ca}. Partially supported by NSERC under Discovery Grant No. 2019-04304 and the Canada Research Chairs program.}}
\date{}
\begin{document}
\maketitle

\begin{abstract}
Thomassen conjectured that triangle-free planar graphs have exponentially many 3-colorings.
Recently, he disproved his conjecture by providing examples of such graphs with $n$ vertices
and at most $2^{15n/\log_2 n}$ 3-colorings.  We improve his construction, giving examples of
such graphs with at most $64^{n^{log_{9/2} 3}}<64^{n^{0.731}}$ 3-colorings.  We conjecture this exponent is optimal.
\end{abstract}

There are many instances in graph theory where if an $n$-vertex graph from some class is guaranteed to contain some
structure (perfect matching, coloring, \ldots), it is actually guaranteed to have at least $c^n$ of them for some fixed constant $c>1$.
For example, every $n$-vertex planar graph has at least $60\cdot 2^{n-3}$ proper 5-colorings~\cite{exp5},
and every $n$-vertex planar graph of girth at least five has at least $2^{n/9}$ proper 3-colorings~\cite{thom-many}.

By Grötzsch' theorem, every triangle-free planar graph is 3-colorable, and Thomassen conjectured that
such graphs actually have exponentially many 3-colorings.  To provide some support for this conjecture,
Thomassen~\cite{thom-many} proved that every $n$-vertex triangle-free planar graph has at least $c^{n^{1/12}}$ proper 3-colorings
for $c=2^{1/20000}$.  This bound was improved by Asadi, Dvořák, Postle, and Thomas~\cite{submany} to $2^{\sqrt{n/212}}$.
However, Thomassen~\cite{nonexp2021} recently disproved his conjecture, showing that for infinitely many $n$,
there exists an $n$-vertex triangle-free planar graph with at most $2^{15n/\log_2 n}$ 3-colorings.
In this note, we further improve Thomassen's construction.

\begin{theorem}\label{thm-main}
For infinitely many integers $n$, there exists an $n$-vertex triangle-free planar graph with at most
$64^{n^{log_{9/2} 3}}<64^{n^{0.731}}$ 3-colorings.
\end{theorem}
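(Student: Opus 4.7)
My plan is to build, by iteration on an index $k$, a sequence of triangle-free planar graphs $G_k$ such that the vertex count $n_k$ grows by a factor tending to $9/2$ per step while the number of proper 3-colorings $c_k$ is cubed per step. Since $(9/2)^{\log_{9/2}3}=3$, such a recursion immediately gives $c_k\le C^{3^k}=C^{n_k^{\log_{9/2}3}}$ up to a multiplicative constant, and for any fixed $C\le 64$ this yields the theorem with the $64$ serving as a mild slack.

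The central object is a gadget $H$ that combines three copies of the previous graph $G_k$ together with an auxiliary "hub" of roughly $(3/2)n_k$ further vertices into $G_{k+1}=H(G_k,G_k,G_k)$. I would need to verify three things: first, that $G_{k+1}$ is triangle-free and planar (achieved by embedding the hub in a face of the three glued copies and using only cycles of length at least four); second, that $|V(G_{k+1})|=(9/2)n_k+O(1)$ by design; and third, that every proper 3-coloring of $G_{k+1}$ restricts, up to $O(1)$ interior ambiguity, to a triple of colorings of the three copies of $G_k$, giving $c_{k+1}\le O(c_k^3)$. The last point is where the triangle-free planar gadget must earn its keep: the hub has $(3/2)n_k$ vertices but is allowed only $O(1)$ of coloring freedom given its boundary.

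Concretely, I would look for a triangle-free planar template with three designated boundary regions, each of size matching a distinguished boundary of $G_k$, so arranged that the colorings of its interior are forced (up to a bounded constant) by the three boundary colorings. A natural source of such templates is smaller instances from the same recursion — effectively using a copy of $G_{k-1}$ as part of the hub — or a separate scaffold assembled from carefully arranged long odd cycles and short attaching paths. Starting from a small base graph $G_0$ satisfying the boundary format and some explicit bound $c_0\le 64$ seeds the induction, and the boundary format must be maintained by $H$ so that $G_{k+1}$ also has three distinguished boundary regions available for the next step.

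The main obstacle is the hub construction itself: designing an explicit triangle-free planar graph on $(3/2)n_k$ vertices with a three-fold boundary and at most constantly many interior 3-colorings per admissible boundary precoloring. This trade-off is precisely what dictates the exponent: the $9/2$ is the total vertex budget per step and the $3$ is the three-way branching, so any improvement would require a hub with strictly fewer auxiliary vertices per copy. The conjecture in the abstract that $\log_{9/2}3$ is optimal suggests this trade-off is essentially tight, and the verification that the proposed hub truly restricts extensions by only a constant factor will likely rest on a case analysis over boundary colorings together with a careful embedding argument to keep girth at least four.
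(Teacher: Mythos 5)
You have correctly reverse-engineered the arithmetic behind the exponent --- a construction whose vertex count grows by a factor of $9/2$ per level while its number of $3$-colorings is at worst cubed does yield $c\le C^{n^{\log_{9/2}3}}$ --- but the proposal stops exactly where the theorem begins: the gadget is never constructed. Everything is conditional on the existence of a triangle-free planar ``hub'' on $\Theta(n_k)$ vertices whose interior colorings are determined up to $O(1)$ by an admissible boundary precoloring, and you yourself flag this as ``the main obstacle.'' Since that existence claim is the entire content of the theorem (the surrounding counting is routine once the hub is granted), this is a genuine gap rather than an omitted detail. The requirement you impose is also delicate: by the lower bound of Asadi, Dvořák, Postle, and Thomas cited in the introduction, a triangle-free planar graph on $m$ vertices has at least $2^{\sqrt{m/212}}$ colorings, so a hub admitting only $O(1)$ extensions per boundary precoloring must have boundary of size $\Omega(\sqrt{m})$, and one must still exclude admissible precolorings with many extensions; none of this is addressed, and it is not clear such a hub exists.

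The paper realizes the $9/2$-versus-$3$ trade-off by a different mechanism that does not fit your template. The bulk of the vertices sit in $3^\ell$ ``leaf'' gadgets $P(x,y,2^k)$ (a path on $2^k$ vertices joined alternately to $x$ and to $y$), each of size roughly $(3/2)^\ell$, while the internal scaffolding contributes only $O(1)$ vertices per node of a ternary tree --- so the recursion is not self-similar: the leaf size depends on the total depth. The key gadget $P(u,v,5)$ has the property that every $3$-coloring makes at least one of three designated vertex pairs monochromatic, and a monochromatic pair at the top of a subtree propagates downward, forcing the corresponding leaf gadgets into a rigid mode with only $2$ colorings each. A pigeonhole count shows all but $2^\ell$ of the $3^\ell$ leaves are rigid; the non-rigid ones may each have as many as $2^{2^k}$ extensions, and choosing $2^k\approx(3/2)^\ell$ makes $2^\ell\cdot 2^k\approx 3^\ell$, which keeps this under control. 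In particular the paper does \emph{not} build a hub with $O(1)$ coloring freedom: the total slack is $2^{\Theta(3^\ell)}$, which is tolerable precisely because $3^\ell=n^{\log_{9/2}3}$ is the target exponent. To complete your argument you would need to exhibit the forcing gadget and the propagation lemma (or a genuine substitute for them); that is the actual work of the proof.
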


As it does not seem possible to further improve the exponent using the same method, we conjecture this bound
is tight.

\begin{conjecture}
There exists a constant $c>1$ such that every $n$-vertex triangle-free planar graph has at least $c^{n^{log_{9/2} 3}}$
distinct proper 3-colorings.
\end{conjecture}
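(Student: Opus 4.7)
My plan is to prove a recursive lower bound mirroring the recursive construction underlying Theorem~\ref{thm-main}. That construction appears to build triangle-free planar graphs by iterating a gadget that multiplies the vertex count by $9/2$ and multiplies the 3-coloring count by $3$; the exponent $\log_{9/2}3$ is exactly the ratio of these amplification factors, so the conjecture asserts that \emph{every} triangle-free planar graph, not only those produced by this construction, has at least this much freedom per vertex.

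Writing $f(n)$ for the minimum of $P(G,3)$ over $n$-vertex triangle-free planar graphs, the conjecture would follow from a recurrence of the form $f(n)\geq c\cdot f(\beta n)^{3}$ for some $\beta\leq 2/9$, $c>0$, and all sufficiently large $n$. Iterating and taking logarithms yields $\log f(n)=\Omega\bigl(n^{\log_{1/\beta}3}\bigr)$, which for $\beta=2/9$ is the desired bound. Such a recurrence would itself follow from a structural theorem: every $n$-vertex triangle-free planar graph $G$ admits a separator $S$ whose removal partitions $G$ into three triangle-free planar pieces $G_{1},G_{2},G_{3}$, each on at most $\beta n$ vertices, in a manner compatible with $3$-colorings --- for a positive-density set of precolorings $\varphi$ of $S$, each $G_{i}$ extends $\varphi$ in at least $P(G_{i},3)/3^{|S|}$ ways. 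Summing the chromatic-polynomial identity $P(G,3)=\sum_{\varphi}\prod_{i} P(G_{i},3\mid\varphi)$ over such $\varphi$ then delivers the multiplicative recurrence.

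Concretely the steps are: (i)~distill from the construction behind Theorem~\ref{thm-main} the precise gadget realising the $3$-to-$9/2$ amplification ratio and the constant-boundary interface around which it is glued; (ii)~prove that every triangle-free planar graph contains such an interface inducing a balanced three-way split --- plausibly via a refinement of Thomassen's precoloring-extension arguments or the hyperbolic-family framework of Dvořák and Kawarabayashi, so as to produce separators of \emph{linear} rather than constant or $O(\sqrt{n})$ size; (iii)~carry out the coloring bookkeeping via an averaging (or entropy) argument showing that a typical precoloring of $S$ is simultaneously extendable across all three pieces with the quoted density; (iv)~dispatch the base case for small $n$ by direct inspection.

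The decisive obstacle is step~(ii). Existing techniques --- Grötzsch-style discharging, hyperbolic-family arguments, and the $O(\sqrt{n})$-separator argument underlying the $2^{\sqrt{n/212}}$ lower bound of Asadi--Dvořák--Postle--Thomas~\cite{submany} --- yield only constant-sized reducible configurations or $O(\sqrt{n})$-sized separators, which produce bounds of the shape $c^{\sqrt{n}}$ rather than $c^{n^{0.731}}$. Bridging this gap demands a genuinely new balanced ternary separator theorem, with linear gadget size and tuned to the chromatic rather than purely metric structure of the graph --- a statement strictly stronger than the classical planar separator theorem and more delicate than anything presently in the literature, and very likely provable only by building directly on the structural insight embedded in the construction of Theorem~\ref{thm-main} itself.
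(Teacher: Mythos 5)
The statement you are addressing is a \emph{conjecture}: the paper offers no proof of it, and indeed the entire content of the paper is the matching \emph{upper}-bound construction of Theorem~\ref{thm-main}, which is what motivates the exponent $\log_{9/2}3$. So there is no proof in the paper to compare against, and your proposal does not supply one either --- you yourself identify step~(ii), the ``balanced ternary coloring-compatible separator theorem,'' as unproven and beyond current techniques. That step is not a technical lemma to be filled in later; it is essentially the whole conjecture. The construction shows that \emph{some} triangle-free planar graphs carry this recursive $3$-versus-$9/2$ structure and have few colorings; it gives no evidence that \emph{every} such graph decomposes this way, and the graphs that are actually extremal for the conjecture are precisely the ones where finding such a decomposition (or any other source of $n^{0.731}$ bits of coloring freedom) is hard.

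There is also a concrete quantitative flaw in step~(iii) that would sink the argument even if step~(ii) were granted exactly as you state it. With three pieces of size at most $2n/9$ the separator $S$ has $|S|\ge n/3$, and your extendability guarantee of ``at least $P(G_i,3)/3^{|S|}$ extensions per piece'' loses a factor of $3^{-\Theta(n)}$ at each level of the recursion, i.e.\ it yields $f(n)\ge f(2n/9)^3\cdot 3^{-\Theta(n)}$ rather than $f(n)\ge c\cdot f(2n/9)^3$ with constant $c$. Unrolling $t=\log_{9/2}n$ levels, the accumulated loss is $\sum_{j<t}3^j\cdot\Theta\bigl((2/9)^j n\bigr)=\Theta(n)$, which swamps the intended main term $3^t\cdot O(1)=O\bigl(n^{\log_{9/2}3}\bigr)$, since $n\gg n^{0.731}$; the resulting lower bound on $\log f(n)$ is negative. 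To make the recurrence work one needs essentially lossless extendability --- a single precoloring of $S$ that each piece extends in at least a constant fraction of its maximum number of extensions, with at most $O(1)$ bits (not $\Theta(|S|)$ bits) lost per recursion level. So the proposal as written is a research program with both an admitted missing structural theorem and an arithmetic gap in the bookkeeping that would remain even if that theorem were proved.
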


\section{The construction}

For a positive integer $b$, let $P(u,v,b)$ denote the graph consisting of vertices $u$ and $v$ and a path $v_1v_2\ldots v_b$,
with $u$ adjacent to $v_1$, $v_3$, \ldots and $v$ adjacent to $v_2$, $v_4$, \ldots; see Figure~\ref{fig-constr}(a).

\begin{figure}
\begin{center}
\includegraphics{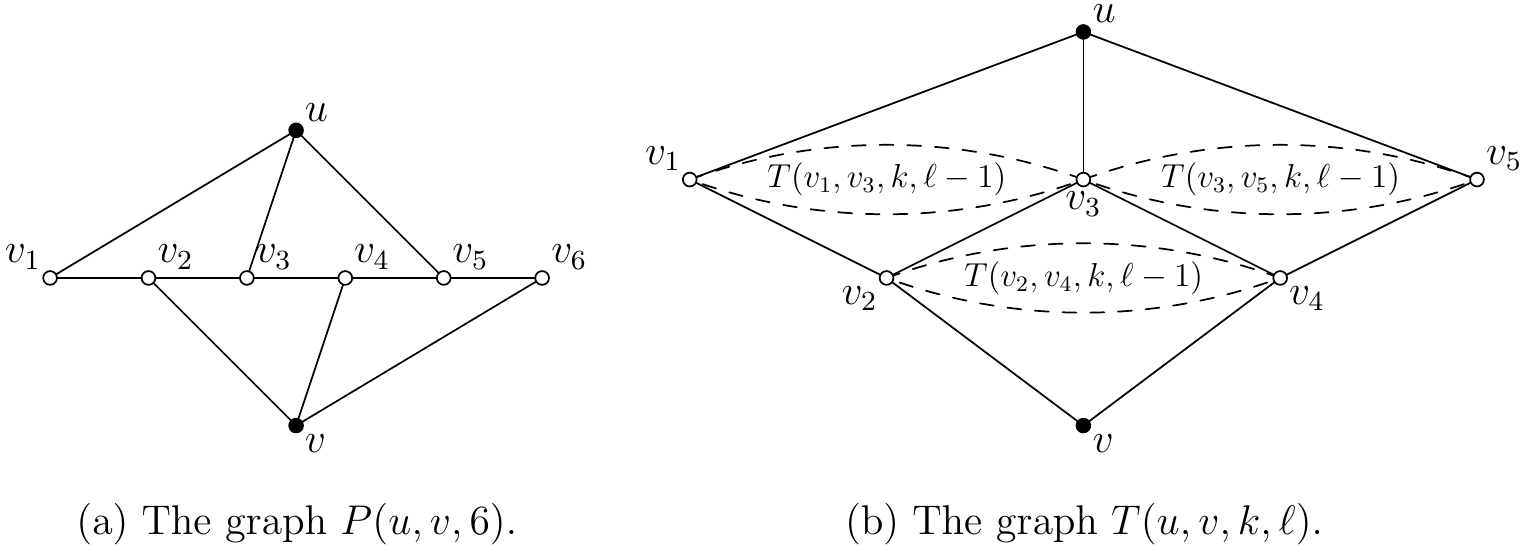}
\end{center}
\caption{The graphs used in the construction.\label{fig-constr}}
\end{figure}

\begin{lemma}\label{lemma-cnt}
Consider the graph $P=P(u,v,5)$, and let $\psi$ be any $3$-coloring of $P$.
\begin{itemize}
\item[(a)] At least one of $\psi(v_1)=\psi(v_3)$, $\psi(v_2)=\psi(v_4)$, and $\psi(v_3)=\psi(v_5)$ holds.
\item[(b)] If $\psi(u)=\psi(v)$, then $\psi(v_1)=\psi(v_3)=\psi(v_5)$ and $\psi(v_2)=\psi(v_4)$.
\end{itemize}
\end{lemma}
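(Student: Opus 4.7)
The plan is to prove both parts by direct case analysis using the constraint that three of the five path vertices must avoid $\psi(u)$ while the other two must avoid $\psi(v)$, which together with the path edges leaves very little freedom in a $3$-coloring.

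For part (b), I would start with the easier observation: if $\psi(u)=\psi(v)$, call this common color $c$. Then the adjacencies $u v_1, u v_3, u v_5$ and $v v_2, v v_4$ force all five vertices $v_1, \ldots, v_5$ to take colors in the $2$-element set $\{1,2,3\}\setminus\{c\}$. Since $v_1v_2v_3v_4v_5$ is a path and only two colors are available, the coloring of the path must strictly alternate between these two colors, giving $\psi(v_1)=\psi(v_3)=\psi(v_5)$ and $\psi(v_2)=\psi(v_4)$ as claimed.

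For part (a), I would argue by contradiction: assume $\psi(v_1)\neq\psi(v_3)$, $\psi(v_3)\neq\psi(v_5)$, and $\psi(v_2)\neq\psi(v_4)$. Because $v_1, v_3, v_5$ are all adjacent to $u$, their colors lie in the two-element set $S = \{1,2,3\}\setminus\{\psi(u)\}$. The assumption that $\psi(v_1)\neq\psi(v_3)$ and $\psi(v_3)\neq\psi(v_5)$ then pins down $\psi(v_1)$ and $\psi(v_5)$ as the unique element of $S$ other than $\psi(v_3)$, so $\psi(v_1)=\psi(v_5)$ and both differ from $\psi(v_3)$.

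The finish is then quick: along the subpath $v_1v_2v_3$, the vertex $v_2$ must avoid the two distinct colors $\psi(v_1),\psi(v_3)\in S$, so $\psi(v_2)=\psi(u)$; the symmetric argument on $v_3v_4v_5$ gives $\psi(v_4)=\psi(u)$, hence $\psi(v_2)=\psi(v_4)$, contradicting the third hypothesis. I do not expect any real obstacle — the only subtlety is keeping track of the role of $u$ versus $v$ and noticing that the adjacency to $v$ is irrelevant once the three constraints from $u$ collapse the available palette for $v_2,v_4$ to a single color.
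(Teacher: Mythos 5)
Your proof is correct and takes essentially the same approach as the paper: for (a) you observe that $v_1,v_3,v_5$ draw their colors from the two-element set avoiding $\psi(u)$, so failing the first and third equalities forces $\psi(v_1)=\psi(v_5)\neq\psi(v_3)$ and hence $\psi(v_2)=\psi(v_4)=\psi(u)$; for (b) you note the path is confined to two colors and must alternate. The only cosmetic difference is that the paper normalizes $\psi(u)=3$ and names the colors explicitly, while you keep the palette abstract.
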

\begin{proof}
Without loss of generality, we can assume $\psi(u)=3$.
For the claim (a), if $\psi(v_1)\neq \psi(v_3)\neq\psi(v_5)$, then since $v_1$, $v_3$, and $v_5$ are adjacent to $u$, we can
without loss of generality assume $\psi(v_1)=\psi(v_5)=1$ and $\psi(v_3)=2$.  However, then $\psi(v_2)=\psi(v_4)=3$.
The claim (b) clearly holds, since if $\psi(v)=3$, then the path $v_1v_2\ldots v_5$ must be colored alternately by colors $1$ and $2$.
\end{proof}
Furthermore, note that for any $b\ge 1$, the graph $P(u,v,b)$ has exactly two 3-colorings that assign both $u$ and $v$ the color $1$.

For any integers $\ell\ge 0$ and $k\ge 1$, let us now define a triangle-free plane graph $T(u,v,k,\ell)$ with two special non-adjacent 
vertices $u$ and $v$ incident with its outer face.
\begin{itemize}
\item For $\ell=0$, we have $T(u,v,k,0)=P(u,v,2^k)$.
\item For $\ell>0$, the graph $T(u,v,k,\ell)$ is obtained from $P(u,v,5)$ by adding
$T(v_1,v_3,k,\ell-1)$, $T(v_2,v_4,k,\ell-1)$, and $T(v_3,v_5,k,\ell-1)$ in the 4-faces, see Figure~\ref{fig-constr}(b).
\end{itemize}

Note that $T(u,v,k,\ell)$ contains $3^\ell$ copies of the graph $P(x,y,2^k)$. Let us denote these copies by $P_1$, \ldots, $P_{3^\ell}$,
where $P_i=P(x_i,y_i,2^k)$; and let us call $(x_1,y_1)$, \ldots, $(x_{3^\ell}, y_{3^\ell})$ the \emph{leaf pairs}.
Let us remark that this implies the number of vertices of $T(u,v,k,\ell)$ is at least
\begin{equation}\label{eq-nall}
3^\ell 2^k.
\end{equation}
Let $V_\ell$ denote the set of vertices of $T(u,v,k,\ell)$ not belonging to $V(P_i)\setminus\{x_i,y_i\}$
for any $i$; note that this set is independent of $k$.  We have $|V_0|=2$ and $|V_{\ell}|=3|V_{\ell-1}|+1$ for $\ell>0$,
and thus
\begin{equation}\label{eq-inner}
|V_\ell|=3^{\ell}|V_0|+\sum_{i=0}^{\ell-1}3^i < 2.5\cdot 3^{\ell}.
\end{equation}

\begin{lemma}\label{lemma-cext}
Suppose $\ell>0$ and consider any 3-coloring $\psi$ of the subgraph of $T(u,v,k,\ell)$ induced by $V_\ell$.
Then $\psi$ extends to at most $2^{2^{k+\ell}+3^\ell}$ 3-colorings of $T(u,v,k,\ell)$.
\end{lemma}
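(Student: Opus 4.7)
The plan is to decompose the extension problem over the $3^\ell$ leaf pairs, bound each factor separately, and then use Lemma~\ref{lemma-cnt} to bound how many leaf pairs can give ``bad'' (large) factors.

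First, observe that once $\psi$ is fixed on $V_\ell$, the remaining vertices of $T(u,v,k,\ell)$ are exactly the interior vertices of the $3^\ell$ paths $P_i = P(x_i,y_i,2^k)$, and these interior sets lie in pairwise disjoint faces of the plane graph induced by $V_\ell$. Hence the number of extensions of $\psi$ to a proper $3$-coloring is the product over $i$ of the number of $3$-colorings of $P_i$ that agree with $\psi(x_i)$ and $\psi(y_i)$. For each $i$, I would bound this local count as follows: if $\psi(x_i)=\psi(y_i)$, then by the remark immediately following Lemma~\ref{lemma-cnt} there are exactly $2$ extensions; if $\psi(x_i)\neq\psi(y_i)$, then each internal vertex $v_j$ of $P_i$ is adjacent to either $x_i$ or $y_i$, hence forbidden from at least one color, so it has at most $2$ available colors, giving at most $2^{2^k}$ extensions.

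Next, I would bound $m := |\{i : \psi(x_i)\neq\psi(y_i)\}|$ by showing $m\le 2^\ell$. The main tool is the following auxiliary sub-claim, proved by a short induction on $\ell$ using Lemma~\ref{lemma-cnt}(b): if $\psi$ is any $3$-coloring of $V_\ell$ in $T(u,v,k,\ell)$ with $\psi(u)=\psi(v)$, then $\psi(x_i)=\psi(y_i)$ for every leaf pair. Indeed, Lemma~\ref{lemma-cnt}(b) forces $\psi(v_1)=\psi(v_3)=\psi(v_5)$ and $\psi(v_2)=\psi(v_4)$, so each of the three recursively attached subcopies $T(v_1,v_3,k,\ell-1)$, $T(v_2,v_4,k,\ell-1)$, $T(v_3,v_5,k,\ell-1)$ again has equal-colored special vertices, and induction finishes. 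With this sub-claim in hand, I would prove $m\le 2^\ell$ by induction on $\ell$: the base case $\ell=0$ is immediate; for $\ell>0$, Lemma~\ref{lemma-cnt}(a) guarantees that at least one of the three attached subcopies has equal-colored special vertices, so by the sub-claim it contributes $0$ to $m$, while the other two contribute at most $2^{\ell-1}$ each by the inductive hypothesis, giving $m\le 2\cdot 2^{\ell-1}=2^\ell$.

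Finally, I would combine the pieces: the total number of extensions is at most
\[
2^{3^\ell - m}\cdot (2^{2^k})^{m} \;=\; 2^{\,3^\ell + m(2^k-1)} \;\le\; 2^{\,3^\ell + 2^\ell(2^k-1)} \;\le\; 2^{\,2^{k+\ell}+3^\ell},
\]
which is the claimed bound. The main obstacle is the middle step: cleanly setting up the two nested inductions (the sub-claim propagating equality of endpoint colors down the recursion, and the bound $m\le 2^\ell$ using Lemma~\ref{lemma-cnt}(a) at the top of the recursion) so that one of the three subcopies is handled by the sub-claim rather than by the inductive hypothesis. Once that structural point is in place, all remaining bounds are elementary.
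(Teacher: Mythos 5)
Your proof is correct and takes essentially the same route as the paper: Lemma~\ref{lemma-cnt}(a) singles out one of the three subcopies whose leaf pairs are all forced to be equal-colored by iterated use of Lemma~\ref{lemma-cnt}(b), recursing on the other two subcopies bounds the number of unequal leaf pairs by $2^\ell$, and the per-pair counts $2$ and $2^{2^k}$ multiply to give the claimed bound. Your write-up merely packages the paper's ``iterated application'' and ``repeating the same argument'' steps as two explicit inductions (and keeps a slightly sharper exponent $3^\ell+m(2^k-1)$ before relaxing it).
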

\begin{proof}
By Lemma~\ref{lemma-cnt}(a), there exists $i\in\{1,2,3\}$ such that $\psi(v_i)=\psi(v_{i+2})$.  By iterated
application of Lemma~\ref{lemma-cnt}(b), we conclude that $\psi(x_j)=\psi(y_j)$ for every leaf pair $(x_j,y_j)$
contained in the 4-cycle $v_iv_{i+1}v_{i+2}z$, where $z=v$ if $i=2$ and $z=u$ otherwise.
Repeating the same argument in the subgraphs $T(v_l,v_{l+2},k,\ell-1)$ for $l\in\{1,2,3\}\setminus \{i\}$,
we conclude that $\psi(x_j)=\psi(y_j)$ holds for all but at most $(2/3)^\ell\cdot 3^\ell=2^\ell$ leaf pairs.
Since $\psi$ extends to a 3-coloring of $P_j$ in 2 ways if $\psi(x_j)=\psi(y_j)$ and in at most $2^{2^k}$ ways
if $\psi(x_j)\neq\psi(y_j)$, the number of extensions of $\psi$ to a 3-coloring of $T(u,v,k,\ell)$
is at most
$$\bigl(2^{2^k}\bigr)^{2^\ell}\cdot 2^{3^\ell}=2^{2^{k+\ell}+3^\ell}.$$
\end{proof}

Since the subgraph of $T(u,v,k,\ell)$ induced by $V_\ell$ can be 3-colored in at most
$3\cdot 2^{|V_\ell|-1}\le 2^{2.5\cdot 3^{\ell}+1}$ ways (see~(\ref{eq-inner})), we conclude that the number of
$3$-colorings of $T(u,v,k,\ell)$ is at most
\begin{equation}\label{eq-totcol}
2^{2.5\cdot 3^{\ell}+1}\cdot 2^{2^{k+\ell}+3^\ell}<2^{2^{k+\ell}+4\cdot 3^\ell}.
\end{equation}

Let us choose $k=\lceil \ell\cdot \log_2 (3/2)\rceil$,
so that $3^\ell\le 2^{k+\ell}\le 2\cdot 3^\ell$, and let $n_\ell$ and $c_\ell$ denote the number of vertices and 3-colorings of $T(u,v,k,\ell)$
for this choice of $k$.  Note that by (\ref{eq-nall}),
$$n_\ell\ge 3^\ell 2^k\ge (9/2)^\ell$$
and by (\ref{eq-totcol}),
$$c_\ell\le 2^{2^{k+\ell}+4\cdot 3^\ell}\le 2^{6\cdot 3^\ell}=2^{6\cdot n_{\ell}^{\log_{9/2} 3}}.$$
This finishes the proof of Theorem~\ref{thm-main}.

\bibliographystyle{siam}
\bibliography{../data.bib}
\end{document}